\journal{Computers $\&$ Mathematics with Applications}
\newtheorem{theorem}{Theorem}[section]
\newtheorem{lemma}[theorem]{Lemma}
\newtheorem{remark}[theorem]{Remark}
\newtheorem{applemma}{Lemma}[section]
\numberwithin{equation}{section}
\newcommand{\triplenorm}[1]{%
	\left\vert\kern-0.9pt\left\vert\kern-0.9pt\left\vert #1
	\right\vert\kern-0.9pt\right\vert\kern-0.9pt\right\vert}  
\renewcommand{\vec}[1]{\mathbf{#1}}
\begin{document}

\begin{frontmatter}




\title{Parameter-free preconditioning for nearly-incompressible linear elasticity}

\author[A]{James H. Adler}

\author[A]{Xiaozhe Hu}

\author[B]{Yuwen Li}

\author[C]{Ludmil T. Zikatanov\corref{cor1}}

\cortext[cor1]{Corresponding author. E-mail address: ludmil@psu.edu (L. T. Zikatanov).}

\address[A]{Department of Mathematics, Tufts
  University, Medford,
  MA 02155, USA}

\address[B]{School of Mathematical Sciences,  Zhejiang University, Hangzhou, Zhejiang 310058, China}

\address[C]{Department of Mathematics,
  The Pennsylvania State University, University Park, PA 16802,
  USA}


\begin{abstract}
It is well known that via the augmented Lagrangian method, one can
solve Stokes' system by solving the nearly incompressible linear
elasticity equation. In this paper, we show that the converse holds, and approximate
the inverse of the linear elasticity operator with a convex linear
combination of parameter-free operators.  In such a way, we construct
a uniform preconditioner for linear elasticity for all values of the
Lam\'{e} parameter $\lambda\in [0,\infty)$.  Numerical results confirm that by using inf-sup stable finite-element spaces for the solution of Stokes' equations, the proposed preconditioner is robust in $\lambda$.

\end{abstract}

\begin{keyword}
linear elasticity\sep preconditioning\sep nearly incompressible limit\sep Fourier analysis


\MSC 65F08\sep 65N22\sep 65N12

\end{keyword}

\end{frontmatter}




\section{Introduction}
The main focus of this work is on developing and analyzing an effective preconditioning operator for the  primal formulation of linear elasticity, particularly in the incompressible limit.  For a body force, $\tilde{\vec{f}}$, acting on an isotropic elastic material, we model the displacement of the deformable media, $\vec{u}$, via the governing equation,
\begin{equation}\label{eq:primal}
{\rm div}\big(2\mu\varepsilon(\vec{u}) + \tilde{\lambda}\mathrm{tr}(\varepsilon(\vec{u}))I\big)=\tilde{\vec{f}}.
\end{equation} 
Here, $\mu$, $\tilde{\lambda}$ are Lam\'{e} parameters, ${\rm tr}$ is the trace operation for tensors, $I$ is the identity tensor, and 
  the strain tensor, $\varepsilon(\vec{u})$, is 
 given by
\[\varepsilon(\vec{u}) = \frac{1}{2}\left ( \nabla\vec{u} + (\nabla\vec{u})^\top\right ).\]  
In terms of Poisson ratio, $\nu$, and Young's modulus, $E$, the Lam\'e constants are expressed as
\[\mu = \frac{E}{2+2\nu},\quad \tilde{\lambda}=\frac{E\nu}{(1+\nu)(1-2\nu)},\quad 0\le \nu< \frac12.\]
The linearly elastic material becomes nearly incompressible when $\nu\to \frac12^{-}$ 
and $\tilde{\lambda}\to\infty$ (cf. \cite{2007Braess}). 

In the incompressible limit, traditional finite-element and finite-difference schemes suffer from volumetric/Poisson locking. This  locking phenomenon is due to the poor representation of the divergence-free vector fields \cite{2013BoffiBrezziFortin-a} in the underlying space. It is therefore not surprising that locking-free numerical schemes of linear elasticity are related to discretization methods for Stokes' equations. 
A quick look at the paper by Bramble~\cite{2003Bramble-a} reveals that the Stokes' inf-sup condition implies the fundamental 2nd Korn's inequality in elasticity.  Such ideas have led to the development of stable and accurate numerical methods for Stokes' equation via the \emph{augmented Lagrangian} formulation, see, e.g., \cite{DouglasWang,BoffiLovadina,Glowinski}. Analysis of the corresponding iterative solution techniques for the resulting linear systems are studied in~\cite{SilvesterWathen}, and the approach has been successful in a variety of related applications~\cite{2019_FarrellNS,2021_FarrellLC,2022_FarrellMHD}.

Conversely, results from solving Stokes' equations can be used for developing schemes for nearly incompressible linear elasticity.  For instance, in~\cite{2007Braess}, Braess introduces an auxiliary variable, $p={\rm div}\,\vec{u}$, and uses the stability of a perturbed Stokes' problem to derive {\it a priori} error estimates which are uniform with respect to $\tilde{\lambda}$. Further works by Sch\"{o}berl~\cite{1999Schoberl} and Carstensen~\cite{2005Carstensen} utilize similar ideas to analyze and show robustness of multigrid solvers and reliability and efficiency of {\it a posteriori} error estimation for finite-element discretizations of linear elasticity. We note that, as shown in~\cite{2021LiZikatanov},  the results derived here, combined with the operator preconditioning framework in~\cite{2004LoghinWathen-a,2011MardalWinther-a}, can be utilized to design novel {\it a posteriori} error estimators for nearly incompressible linear elasticity.

Based on this notion, the main contribution of this work is to use stable discretizations of Stokes' equations to develop a preconditioner for linear elasticity that, unlike most others, is provably robust and performs uniformly well for all values of $\tilde{\lambda}\in[0,\infty)$. The main ingredients in the construction are: (1) the action of the inverse of a standard, parameter-free, elliptic operator;  and (2) computing an $H^1$-type orthogonal projection onto the space of (discrete) divergence-free vector fields. We note here that computing the projection requires solving another parameter-free (discrete) Stokes' problem. 
The underlying idea comes from a simple observation concerning linear elasticity with periodic boundary conditions, under which our preconditioner reduces to the exact inverse of the linear elasticity operator. In general, the preconditioner is not the exact inverse, but is very close to it. Such claims are validated by numerical tests showing that the preconditioned linear system corresponding to discretizations of~\eqref{eq:primal} have uniformly bounded condition numbers. 

This rest of the paper is organized as follows. Section \ref{sec:notation} sets up the bilinear forms and notation used throughout the paper.  In Section \ref{sec:infsup}, an inf-sup condition and Korn inequality is established to help build a parameter-free preconditioner.  Next, the spectral equivalence result that yields the robust preconditioner is given in Section \ref{sec:spectral}.  The case of periodic boundary conditions is also considered here.  Finally, numerical results confirming the theory is shown in Section \ref{sec:results}, with concluding remarks given in Section \ref{sec:conclusion}.

\section{Preliminaries and Notation}\label{sec:notation}
Let $\Omega\subset \mathbb{R}^d$ with $d\in\{2,3\}$ be
a bounded polyhedron with Lipschitz boundary. Let $(\cdot,\cdot)$ denote the $L^2(\Omega)$ inner product, $\|\cdot\|$ the $L^2(\Omega)$ norm, $Q=L^2(\Omega)$, and $V\subset [H^1(\Omega)]^d$ be a Hilbert space. By $\langle\cdot,\cdot\rangle$ we denote the duality pairing 
between $V$ and its
dual $V'$ or $Q$ and its dual $Q^\prime$.  The
boundary of $\Omega$ is $\Gamma=\partial\Omega=\Gamma_D\cup\Gamma_N$,
where $\Gamma_D$ is a closed set with respect to $\Gamma$ with a nonzero
$(d-1)$ dimensional measure.  Further, we denote by
$[H^1_D(\Omega)]^d\subset [H^1(\Omega)]^d$ the space of vector-valued
functions on $\Omega$ with vanishing traces on $\Gamma_D$.  Often, $V=[H^1_D(\Omega)]^d$, however,
we also consider examples with periodic boundary conditions on the
unit cube in $\mathbb{R}^d$ and thus, $V$ and $Q$ will be modified accordingly. 

The
variational problem of \eqref{eq:primal} is to find $\vec{u}\in V$ such that
\begin{equation}\label{elasticity-b-forms}
	\begin{aligned}
		&     a_{\lambda}(\vec{u},\vec{v}) = a(\vec{u},\vec{v}) + \lambda b(\vec{v} , \operatorname{div} \vec{u})
		= \langle \vec{f},\vec{v}\rangle,
	\end{aligned}
\end{equation}
 for all $\vec{v}\in V$, where $\mathbf{f}:=\tilde{\mathbf{f}}/(2\mu)$, $\lambda:=\tilde{\lambda}/(2\mu)$, and
\begin{align*}
    a(\vec{u},\vec{v})&= (\varepsilon(\vec{u}),\varepsilon(\vec{v})),\\b(\vec{v},q)&=(\operatorname{div} \vec{v},q).
\end{align*}
Note that we divide the original equation by $2\mu$ and obtain a modified parameter 
\[
\lambda=\frac{\nu}{1-2\nu},\quad 0\le \nu< \frac12.
\]
The bilinear forms in~\eqref{elasticity-b-forms} define
operators $A_\lambda:V\to V'$, and $B:V\to Q'$ by
\begin{equation}\label{e:operators}
	\begin{aligned}
		 \langle A_\lambda \vec{u}, \vec{v}\rangle&:= a_\lambda( \vec{u} , \vec{v}),\\ 
		\langle B\vec{v},q\rangle &:= b( \vec{v} ,q)=(\operatorname{div} \vec{v},q).
	\end{aligned}
\end{equation}
As $\lambda=0$ is a special case, we define $A:=A_0$ with
\begin{equation}\label{a-0}
	\langle A \vec{u} ,  \vec{v} \rangle = a( \vec{u} , \vec{v} ):=\left(\varepsilon\left(\vec{u}\right),\varepsilon\left(\vec{v}\right)\right).
\end{equation}

For any two operators, $X$ and $Y$ mapping a space $V$ to its dual $V'$, we write $X\lesssim Y$ when  
$\left\langle Xv,v\right\rangle\leq C\left\langle Yv,v\right\rangle$ holds for any $v\in V$ with a generic constant $C$ depending on $\Omega$ and independent of $\lambda$ and $\mu$. By $X\eqsim Y$, we denote $X\lesssim Y$ \emph{and} $Y\lesssim X$. Then, the goal of this paper is to show that
\begin{equation}\label{eq:m-lambda-def-0}
A_{\lambda}^{-1}\eqsim \frac{\lambda}{\lambda+1}PA^{-1} + \frac{1}{\lambda+1}A^{-1}=:M_\lambda,
\end{equation}
where $P$ is the $a_\lambda(\cdot,\cdot)$-orthogonal projection onto the space of divergence-free vector fields. Clearly $P$ can be implemented by 
solving Stokes' equations.

\section{Brezzi's inf-sup condition and Korn's inequality}\label{sec:infsup}
In order to develop a robust preconditioner, we consider some properties related to the inf-sup
condition on $V\times Q$ (see
e.g.~\cite{1974Brezzi-a,1969Ladyzhenskaya-a,1991BrezziFortin-a}):
\begin{equation}\label{e:inf-sup}
	\inf_{q\in Q}\sup_{\vec{v}\in V}\frac{(\operatorname{div} \vec{v},q)}{\|\nabla \vec{v}\|\|q\|}\ge \widetilde{\beta}>0.
\end{equation}
As is shown in~\cite{2003Bramble-a}, \eqref{e:inf-sup} is
equivalent to the following inequality due to Ne\v{c}as~\cite{1967Necas-a}:
\begin{equation}\label{e:necas}
	\|\vec{u}\|  \lesssim
	\left(\|\vec{u}\|_{H^{-1}(\Omega)}^2 + \sum_{j=1}^d
	\left\|\frac{\partial \vec{u}}{\partial x_j}\right\|_{H^{-1}(\Omega)}^2 \right)^{1/2}
\end{equation}
In addition,~\cite{2003Bramble-a} shows that~\eqref{e:necas} implies Korn's inequality:
\begin{equation}\label{e:korn-0}
	\|\nabla \vec{u}\|
	\lesssim \|\vec{u}\| + \|\varepsilon(\vec{u})\|, \quad \forall \vec{u}\in [H^1(\Omega)]^d.
\end{equation}
The classical Korn's inequality is found in~\cite{1908KornA-aa,1909KornA-aa}. The proof of this inequality is simple under Dirichlet boundary conditions. The situation is much more complicated in the case of traction conditions on part of the boundary. We refer to Kondratiev and Oleinik in~\cite{1989KondratievV_OleinikO-aa,1989KondratievV_OleinikO-ab}, Duvaut and Lions~\cite{1976DuvautLions-a}, Nitsche~\cite{1981Nitsche-a}, and Bramble~\cite{2003Bramble-a} for proofs of various types of Korn's inequalities.
An important consequence of~\eqref{e:korn-0} is the following lemma which shows the coercivity of $a(\cdot,\cdot)$. For completeness, we include a proof of this well known result following ~\cite{1997Arnold-a} (see Appendix~\ref{apx:lemma}).
\begin{lemma}\label{l:korn-11}[\cite{1997Arnold-a}, p.~27]
	Let  $\mathfrak{R}$ be the space of rigid body motions,
	\[
	\mathfrak{R}=\left\{\vec{c}+\mathfrak{m}\vec{x}\;|\;\vec{c}\in\mathbb{R}^d,\quad \mathfrak{m}\in \mathfrak{so}(d)\right\},
	\]
 where $\vec{x}$ is the position vector in $\mathbb{R}^d$
	and $\mathfrak{so}(d)$ is the algebra of the real and anti-symmetric  
	$d\times d$ matrices.  Then it holds that
	\begin{equation}\label{e:korn-11}
		\|\nabla \vec{u}\| \lesssim  \|\varepsilon(\vec{u})\|,\quad\forall\vec{u}\in [H^1_D(\Omega)]^d\cup\big([H^1(\Omega]^d\cap\mathfrak{R}^{\perp_{L^2}}\big).
	\end{equation}
\end{lemma}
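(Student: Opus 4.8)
The plan is to derive the coercivity estimate \eqref{e:korn-11} from the more elementary form of Korn's inequality \eqref{e:korn-0}, treating the two cases in the union separately since they are handled by the same compactness argument but with different quotient spaces. In both cases the strategy is a standard Peetre--Tartar / contradiction argument: one shows that $\vec{u}\mapsto\|\varepsilon(\vec{u})\|$ controls $\|\vec{u}\|_{H^1}$ on the relevant subspace, using \eqref{e:korn-0} together with the compact embedding $[H^1(\Omega)]^d\hookrightarrow[L^2(\Omega)]^d$ and the fact that the kernel of $\varepsilon$ is exactly $\mathfrak{R}$.

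First I would recall the well-known characterization that for $\vec{u}\in[H^1(\Omega)]^d$ one has $\varepsilon(\vec{u})=0$ if and only if $\vec{u}\in\mathfrak{R}$; this is elementary (the symmetric part of the gradient vanishing forces the gradient to be a constant antisymmetric matrix). Next, for the case $\vec{u}\in[H^1(\Omega)]^d\cap\mathfrak{R}^{\perp_{L^2}}$, I would argue by contradiction: if \eqref{e:korn-11} failed there would be a sequence $\vec{u}_n$ in this subspace with $\|\nabla\vec{u}_n\|=1$ and $\|\varepsilon(\vec{u}_n)\|\to0$. By \eqref{e:korn-0}, $\|\vec{u}_n\|$ is bounded, so $\vec{u}_n$ is bounded in $[H^1(\Omega)]^d$; passing to a subsequence, $\vec{u}_n\rightharpoonup\vec{u}$ weakly in $H^1$ and strongly in $L^2$ by Rellich. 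Then $\varepsilon(\vec{u})=0$ so $\vec{u}\in\mathfrak{R}$, while the $L^2$-orthogonality to $\mathfrak{R}$ passes to the limit, forcing $\vec{u}=0$; but then \eqref{e:korn-0} applied to $\vec{u}_n$ gives $\|\nabla\vec{u}_n\|\lesssim\|\vec{u}_n\|+\|\varepsilon(\vec{u}_n)\|\to0$, contradicting $\|\nabla\vec{u}_n\|=1$. For the case $V=[H^1_D(\Omega)]^d$, the identical argument works verbatim with $\mathfrak{R}^{\perp_{L^2}}$ replaced by the constraint of vanishing trace on $\Gamma_D$: the only point to check is that $\mathfrak{R}\cap[H^1_D(\Omega)]^d=\{0\}$, which holds because a rigid body motion vanishing on a set $\Gamma_D$ of positive $(d-1)$-measure must be identically zero (an affine map vanishing on a set not contained in a hyperplane is zero, and $\Gamma_D$ having positive measure in $\Gamma$ is not contained in a hyperplane for a Lipschitz domain).

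The main obstacle, such as it is, is bookkeeping rather than depth: one must be careful that the compactness argument is applied within the correct closed subspace so that the weak limit stays in that subspace, and one must invoke the right trace-theoretic fact in the Dirichlet case to rule out nonzero rigid motions. A secondary point worth stating cleanly is that \eqref{e:korn-0} is exactly what upgrades the weak-$L^2$ convergence into the quantitative bound needed to close the contradiction; without it one would only know $\varepsilon$ has trivial kernel on the subspace, not that it is bounded below. Since the paper defers the full write-up to Appendix~\ref{apx:lemma} and cites \cite{1997Arnold-a}, I would present the above as the complete line of reasoning and leave the verification of the elementary facts (kernel of $\varepsilon$, rigid motions and traces) to the cited reference.
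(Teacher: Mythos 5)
Your overall strategy coincides with the paper's: a contradiction/compactness argument based on \eqref{e:korn-0} and Rellich's theorem, identifying the kernel of $\varepsilon$ with $\mathfrak{R}$ and then ruling out nonzero rigid motions in each subspace. However, two steps as written do not hold up. First, you justify the $L^2$-boundedness of the contradicting sequence by \eqref{e:korn-0}; that inequality bounds $\|\nabla\vec{u}_n\|$ by $\|\vec{u}_n\|+\|\varepsilon(\vec{u}_n)\|$ and cannot bound $\|\vec{u}_n\|$ itself. What is needed (and what the paper invokes) is a Poincar\'e-type inequality: Poincar\'e--Friedrichs in the $[H^1_D(\Omega)]^d$ case, and Poincar\'e--Wirtinger in the $\mathfrak{R}^{\perp_{L^2}}$ case, where orthogonality to the constant fields contained in $\mathfrak{R}$ gives zero mean. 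The fix is immediate, but as stated the step fails.

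Second, and more substantively, your argument that $\mathfrak{R}\cap[H^1_D(\Omega)]^d=\{0\}$ rests on the claim that a closed set $\Gamma_D\subset\Gamma$ of positive $(d-1)$-measure cannot be contained in a hyperplane. That is false: for the polyhedral domains considered here, $\Gamma_D$ may be a single flat face (e.g., the bottom face of the unit cube), which lies entirely in a hyperplane, so the ``affine map vanishing on an affinely spanning set'' argument does not apply. The conclusion is still true, but its proof must use the antisymmetry of $\mathfrak{m}$: if $\vec{c}+\mathfrak{m}\vec{x}$ vanishes on a planar face of $\Gamma_D$, then $\mathfrak{m}$ annihilates the $(d-1)$-dimensional tangent space of that face, so its rank is at most one; since the nonzero eigenvalues of a real antisymmetric matrix are purely imaginary and come in conjugate pairs, its rank is even, hence $\mathfrak{m}=0$, and then $\vec{c}=0$ because the constant vanishes on $\Gamma_D$. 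This is precisely the argument in the paper's appendix; without it (or an explicit citation covering the fact $\mathfrak{R}\cap[H^1_D(\Omega)]^d=\{0\}$), your Dirichlet case is incomplete.
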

 
We exploit the fact that the linear elasticity problem for large $\lambda$ can be viewed as a penalty formulation of a constrained minimization problem (see, e.g. \cite{1986GiraultRaviart-a,2013BoffiBrezziFortin-a,2003Bramble-a,1967Necas-a}).
We introduce the subspace of divergence-free functions,
\[
W := \operatorname{Ker}(B)=\left\{\vec{v}\in V \;\big|\; B\vec{v}=\operatorname{div} \vec{v} = 0\right\}.
\]
As $B=\operatorname{div}$ is a continuous operator, its kernel is a closed subspace of $V$.  Then, \eqref{e:korn-11} and the equivalence between~\eqref{e:inf-sup} and \eqref{e:necas} imply that $a(\cdot,\cdot)=(\varepsilon(\cdot),\varepsilon(\cdot))$ is an inner product on $V$ with corresponding norm equivalent to the $\|\cdot\|_{[H^1(\Omega)]^d}$ norm. This yields the following inf-sup conditions, equivalent to~\eqref{e:inf-sup}:
\begin{equation}\label{e:inf-sup-1}
	\begin{aligned}
		\inf_{q\in Q}\sup_{\vec{v}\in V}\frac{(\operatorname{div} \vec{v},q)}{\|\varepsilon(\vec{v})\|\|q\|}=
		\inf_{\vec{v}\in W^{\perp}}\sup_{q\in Q}\frac{(\operatorname{div} \vec{v},q)}{\|\varepsilon(\vec{v})\|\|q\|}\ge \beta>0.
	\end{aligned}
\end{equation}
Here, the orthogonality in $W^{\perp}$ is in terms of the inner product $a(\cdot,\cdot)$. For the proof of the equivalence between the two conditions in~\eqref{e:inf-sup-1}, we refer to
Girault and Raviart~\cite[Lemma~4.1]{1986GiraultRaviart-a}.

Next, we define $P:V\to W$ to be the orthogonal projection onto $W$ with respect to $a(\cdot,\cdot)$. In other words,  for $\vec{v}\in V$ the projection $P\vec{v}\in W$ is the unique solution to
\begin{equation}\label{e:projection}
	a(P\vec{v},\vec{w}) = a(\vec{v},\vec{w}), \quad \forall\vec{w}\in W.
\end{equation}
It is immediate to see that $\vec{v}_0=P\vec{v}$ solves the Stokes' equation:\\
Find $(\vec{v}_0,p)\in V\times Q$ such that
\begin{equation}\label{stokes-b-forms}
	\begin{aligned}
		   a( \vec{v}_0 , \vec{w} ) + b( \vec{w} ,p) &= a( \vec{v}, \vec{w} ),\quad &&\forall\vec{w}\in V,\\    
     b( \vec{v}_0 ,q)&=0,\quad &&\forall q\in Q,
	\end{aligned}
\end{equation}
where  the ``pressure" 
 $p$ serves as a Lagrange multiplier for the divergence free constraint.

\section{Spectral equivalence and a robust preconditioner}\label{sec:spectral}
In this section, we use the aforementioned relationship between the inf-sup condition and Korn's inequality to develop a robust preconditioner, $M_{\lambda}$, for the linear elasticity equations. We start by proving the spectral equivalence~\eqref{eq:m-lambda-def-0} between $M_{\lambda}$ and the inverse of the linear elasticity operator, $A_\lambda$. 
\begin{theorem}\label{t:spectral-equivalence}
	If $M_{\lambda}:V'\to V$ is defined by
	\begin{equation}\label{e:preconditioner}
		M_{\lambda}=\frac{\lambda}{1+\lambda}PA^{-1} +\frac{1}{\lambda+1} A^{-1},
	\end{equation}
	where $A_\lambda$, $A$, and $P$ are as in~\eqref{e:operators}, \eqref{a-0} and \eqref{e:projection}. Then,
	\begin{equation}\label{e:spectral-equivalence}
		\langle \vec{g},M_\lambda \vec{g}\rangle \eqsim 
		\langle \vec{g},A_{\lambda}^{-1} \vec{g}\rangle \quad\forall \vec{g}\in V'.
	\end{equation}
\end{theorem}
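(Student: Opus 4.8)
The plan is to work throughout with $a(\cdot,\cdot)$ as an inner product on $V$, which is legitimate by Lemma~\ref{l:korn-11}: the induced norm $\|\vec v\|_A := \|\varepsilon(\vec v)\|$ is then equivalent to the $[H^1(\Omega)]^d$ norm, $A\colon V\to V'$ is an isomorphism, and $\langle \vec g,A^{-1}\vec g\rangle = \|A^{-1}\vec g\|_A^2$ for every $\vec g\in V'$. I will use the $a$-orthogonal splitting $V = W\oplus W^\perp$ with $W=\ker B$ and $P$ the projection onto $W$ from~\eqref{e:projection}. The structural observation I would exploit is that $A_\lambda$ is block diagonal for this splitting: if $\vec w\in W$ and $\vec v\in W^\perp$, then $\langle A_\lambda\vec w,\vec v\rangle = a(\vec w,\vec v)+\lambda(\operatorname{div}\vec w,\operatorname{div}\vec v)=0$, since $\operatorname{div}\vec w=0$ and $\vec w$ is $a$-orthogonal to $\vec v$; moreover $A_\lambda$ coincides with $A$ on $W$. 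The same then holds for the inverses and for $M_\lambda$, so the comparison decouples into a contribution on $W$ and one on $W^\perp$.

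I would next make this decoupling quantitative. Fix $\vec g\in V'$ and set $\hat{\vec g}:=A^{-1}\vec g\in V$, so that $\langle \vec g,\vec v\rangle=a(\hat{\vec g},\vec v)$ for all $\vec v\in V$. Using that $P$ is $a$-selfadjoint and idempotent together with the Pythagoras identity $\|\hat{\vec g}\|_A^2=\|P\hat{\vec g}\|_A^2+\|(I-P)\hat{\vec g}\|_A^2$, a direct computation from~\eqref{e:preconditioner} gives
\begin{equation*}
\langle \vec g,M_\lambda\vec g\rangle = \|P\hat{\vec g}\|_A^2 + \tfrac{1}{1+\lambda}\,\|(I-P)\hat{\vec g}\|_A^2 .
\end{equation*}
For the other side, put $\vec u:=A_\lambda^{-1}\vec g$. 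Testing $a_\lambda(\vec u,\cdot)=\langle \vec g,\cdot\rangle$ against elements of $W$, where the penalty term drops out, shows $P\vec u=P\hat{\vec g}$; writing $\vec u=P\hat{\vec g}+\vec u_1$ with $\vec u_1:=(I-P)\vec u\in W^\perp$ and noting $\operatorname{div}\vec u=\operatorname{div}\vec u_1$ yields
\begin{equation*}
\langle \vec g,A_\lambda^{-1}\vec g\rangle = a_\lambda(\vec u,\vec u) = \|P\hat{\vec g}\|_A^2 + \|\vec u_1\|_A^2 + \lambda\|\operatorname{div}\vec u_1\|^2 ,
\end{equation*}
where $\vec u_1$ is characterized by $a(\vec u_1,\vec v_1)+\lambda(\operatorname{div}\vec u_1,\operatorname{div}\vec v_1)=a(\vec z,\vec v_1)$ for all $\vec v_1\in W^\perp$, with $\vec z:=(I-P)\hat{\vec g}$. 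Since the $\|P\hat{\vec g}\|_A^2$ terms in the two displays are identical, the theorem reduces to the equivalence $\|\vec u_1\|_A^2+\lambda\|\operatorname{div}\vec u_1\|^2\eqsim\tfrac{1}{1+\lambda}\|\vec z\|_A^2$ on $W^\perp$.

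The hard part is precisely this last equivalence, and it is the only step where the inf-sup condition (rather than just coercivity of $a$) is used. On $W^\perp$ the second inf-sup condition in~\eqref{e:inf-sup-1} gives the lower bound $\|\operatorname{div}\vec v_1\|\ge\beta\|\vec v_1\|_A$, while the elementary bound $\|\operatorname{div}\vec v_1\|=\|\operatorname{tr}\varepsilon(\vec v_1)\|\le\sqrt d\,\|\vec v_1\|_A$ holds trivially. Hence the quadratic form $\vec v_1\mapsto\|\vec v_1\|_A^2+\lambda\|\operatorname{div}\vec v_1\|^2$ on $W^\perp$ is sandwiched between $(1+\lambda\beta^2)\|\vec v_1\|_A^2$ and $(1+\lambda d)\|\vec v_1\|_A^2$, i.e.\ it is spectrally equivalent to $(1+\lambda)\,a(\cdot,\cdot)$ on $W^\perp$ with constants depending only on $\beta$ and $d$. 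Inverting this spectral equivalence (valid for symmetric positive definite operators) produces the factor $\tfrac{1}{1+\lambda}$; applying the resulting estimate to the functional $\vec v_1\mapsto a(\vec z,\vec v_1)$ on $W^\perp$, whose $a(\cdot,\cdot)$-representative is $\vec z$ and whose $a_\lambda$-representative is $\vec u_1$, gives $\|\vec u_1\|_A^2+\lambda\|\operatorname{div}\vec u_1\|^2\eqsim\tfrac{1}{1+\lambda}\|\vec z\|_A^2$. Combining this with the two identities above proves $\langle \vec g,A_\lambda^{-1}\vec g\rangle\eqsim\langle \vec g,M_\lambda\vec g\rangle$ with constants independent of $\lambda$ and $\mu$, as claimed. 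I expect the only genuinely delicate point to be keeping the $\lambda$-dependence exact in this last step; everything else is bookkeeping with the $a$-orthogonal decomposition.
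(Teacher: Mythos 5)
Your argument is correct, and it reaches the result by a different organization than the paper's proof, though it rests on the same analytic core. The paper works at the primal level: using $P^2=P$ and the identity $(I+tP)^{-1}=I-\tfrac{t}{t+1}P$ it computes $M_\lambda^{-1}=A+\lambda A(I-P)$ explicitly, so the whole theorem collapses to the single norm equivalence $\|\operatorname{div}\vec{v}\|\eqsim\|\varepsilon(\vec{v}-P\vec{v})\|$, whose two sides are exactly your bounds $\beta\|\varepsilon(\vec{v}_1)\|\le\|\operatorname{div}\vec{v}_1\|\le\sqrt{d}\,\|\varepsilon(\vec{v}_1)\|$ for $\vec{v}_1\in W^\perp$ (the restricted inf-sup condition \eqref{e:inf-sup-1} plus the elementary trace estimate). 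You instead work on the dual side: splitting $V=W\oplus W^{\perp}$ $a$-orthogonally, showing the $W$-components of $A^{-1}\vec{g}$ and $A_\lambda^{-1}\vec{g}$ coincide ($P\vec{u}=P\hat{\vec{g}}$), and comparing the $W^\perp$-components by inverting the equivalence $a_\lambda\eqsim(1+\lambda)a$ on $W^\perp$ through the standard duality (sup) characterization. The bookkeeping differs but the key inequality is identical; the paper's route is shorter because the explicit formula for $M_\lambda^{-1}$ avoids decomposing $\vec{g}$ at all, with the passage from $M_\lambda^{-1}\eqsim A_\lambda$ to $A_\lambda^{-1}\eqsim M_\lambda$ being the same inverse-equivalence fact for symmetric positive definite operators that you invoke, only applied globally rather than on $W^\perp$. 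What your version buys is that it makes explicit that $M_\lambda$ is \emph{exact} on the divergence-free component and that the equivalence constants enter only through the $W^\perp$ part --- precisely the mechanism by which the periodic case of Section~\ref{sec:periodic} becomes an identity rather than an equivalence.
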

\begin{proof}
To show the spectral equivalence, it suffices to prove that for any $\lambda\in[0,\infty)$ and $\vec{v}\in V$,
	\begin{equation}\label{e:invert}
		\langle A_\lambda \vec{v},\vec{v} \rangle \lesssim
		\langle M_\lambda^{-1} \vec{v},\vec{v} \rangle \lesssim \langle A_\lambda \vec{v},\vec{v} \rangle .
	\end{equation}
	It follows from $P^2=P$ that
	\begin{equation}\label{e:inverse-idempotent}
		(I+t P)^{-1} =  I-\frac{t}{t+1} P,\quad\forall  t\in\mathbb{R}\backslash\{-1\}.
	\end{equation}
 Using this fact, we obtain
	\begin{equation}\label{ee}
		\begin{aligned}
			M_{\lambda}^{-1} & =
		\left(\frac{1}{\lambda+1}\left(I+\lambda P\right)A^{-1}\right)^{-1}\\
  &=A\left((\lambda+1)I-\lambda P\right)\\
			& = A +\lambda A(I-P).
		\end{aligned}
	\end{equation}
	By comparing \eqref{ee} with $\langle A_{\lambda}\vec{v},\vec{v}\rangle=
	\langle A \vec{v},\vec{v}\rangle + \lambda \|\operatorname{div} \vec{v}\|^2$,
 it remains to show that
	\begin{equation}\label{eeee}
 \begin{aligned}
		&\|\operatorname{div} \vec{v}\|^2\eqsim
		\langle A (I-P) \vec{v},\vec{v}\rangle\\
  &\quad=a\left(\vec{v}-P\vec{v},\vec{v}-P\vec{v}\right) = \left\|\varepsilon\left(\vec{v}-P\vec{v}\right)\right\|^2.
  \end{aligned}
	\end{equation}
	The lower bound in~\eqref{eeee} directly follows from
	\begin{align*}
	    \|\operatorname{div} \vec{v}\|&=\sup_{q\in Q,~\|q\|=1}({\rm div}\vec{v},q)=
	\sup_{q\in Q,~\|q\|=1}({\rm div}(\vec{v}-P\vec{v}),q)\\
 &\leq 
	\left\|{\rm tr} [\varepsilon(\vec{v}-P\vec{v})]\right\|
	\leq\sqrt{d} \left\|\varepsilon(\vec{v}-P\vec{v})\right\|.
	\end{align*}
	The upper bound is just a restatement of~\eqref{e:inf-sup-1} because $(I-P)\vec{v}\in W^{\perp}$. In fact, we have
	\begin{align*}
		\beta \left\|\varepsilon\left(\vec{v}-P\vec{v}\right)\right\|&\leq 
		\sup_{q\in Q,~\|q\|=1}\big({\rm div} (\vec{v}-P\vec{v}),q\big)\\ &=\sup_{q\in Q,~\|q\|=1}\big({\rm div} \vec{v},q\big)=\left\|\operatorname{div} \vec{v}\right\|,
	\end{align*}
	and this completes the proof of \eqref{eeee} and, hence,  \eqref{e:spectral-equivalence}.    
\end{proof}


\subsection{Periodic boundary conditions}\label{sec:periodic}
To expose the main idea for the preconditioner and motivate how to tackle a more general case,
we investigate the case of periodic boundary conditions. Here, the spectral equivalence is in fact an equality (``$\eqsim$'' in~\eqref{e:spectral-equivalence} becomes ``$=$'').
For a given $\vec{f}$ (periodic in all $d$ directions),
we extend the solution $\vec{u}$ to $\mathbb{R}^d$ by periodicity and use a Fourier transform. We define
$\vec{u}_\lambda$, $\vec{u}_{\infty}$, and $\vec{u}_0$ as the solutions to
\[
A_\lambda \vec{u}_\lambda = \vec{f}, \quad 
A_\infty \vec{u}_\infty = \vec{f}, \quad \text{ and }
A \vec{u}_0 = \vec{f},
\]
respectively.  Here,  $A_{\infty}$ is the operator corresponding to the Stokes equation (with periodic boundary conditions) and $\vec{u}_\infty$ is the velocity component of its solution.  To show that $M_\lambda=A_{\lambda}^{-1}$, we prove that 
\begin{equation}\label{spectral-equivalence-u-p}
	\vec{u}_{\lambda}=
	\frac{\lambda}{\lambda+1} \vec{u}_{\infty} + 
	\frac{1}{\lambda+1} \vec{u}_{0}.
\end{equation}
The proof of this relation is a straightforward computation using the Fourier transform and the following identities:
\begin{eqnarray}
	&&   \widehat{\operatorname{div}\varepsilon( \vec{w} )} = \frac12|\xi|^2(I+\Pi_{\xi})\widehat{ \vec{w} }, \label{symbols-0}\\
	&&   \widehat{\operatorname{div}\mathcal{C}\varepsilon( \vec{w} )} = \frac12|\xi|^2\left(I+(2\lambda+1)\Pi_{\xi}\right)\widehat{ \vec{w} }, \label{symbols-1}
\end{eqnarray}
where $\Pi_{\xi} = |\xi|^{-2} \xi\xi^*$, $\mathcal{C}(X) =\frac{1}{2}(X+X^*) + \lambda \operatorname{tr}(X) I$, and $X\in \mathbb{R}^{d\times d}$.
Notice that $\Pi_{\xi}^2 = \Pi_{\xi}$ and, hence,~\eqref{e:inverse-idempotent} holds with $\Pi_\xi$ instead of $P$.
We then find that 
\begin{equation}\label{u-lambda-0}
	\widehat{ \vec{u} }_{\lambda} = 2|\xi|^{-2} \left(I-\frac{2\lambda+1}{2(\lambda+1)}\Pi_\xi\right) \widehat{ \vec{f} }, \qquad
	\widehat{ \vec{u} }_{0} = 2|\xi|^{-2} \left(I-\frac{1}{2}\Pi_\xi\right) \widehat{ \vec{f} }.
\end{equation}
Furthermore, the Stokes' problem in the Fourier space is:
\begin{equation}
	\begin{pmatrix}
		\frac12|\xi|^{2}(I+\Pi_\xi) & \xi \\
		\xi^* & 0
	\end{pmatrix}
	\begin{pmatrix}
		\widehat{ \vec{u} }_\infty\\ \widehat{p}
	\end{pmatrix}=
	\begin{pmatrix}
		\widehat{ \vec{f} }\\ 0
	\end{pmatrix}.
\end{equation}
Solving this system shows that
\begin{equation}\label{fourier-solve}
	\widehat{ \vec{u} }_{\infty} = 2|\xi|^{-2} (I-\Pi_\xi) \widehat{ \vec{f} },
	\qquad \widehat{p} = 
	|\xi|^{-2}\xi^* \widehat{ \vec{f} }.
\end{equation}
Finally, the relation~\eqref{spectral-equivalence-u-p} follows immediately from~\eqref{u-lambda-0} and
\eqref{fourier-solve}.

\subsection{Discrete problems}
Although we have defined the preconditioner $M_\lambda$ for $A_\lambda$ on the continuous level, a quick check shows that the analysis in Theorem \ref{t:spectral-equivalence} holds verbatim for the discretized problem as long as a Stokes stable finite-element pair, $V_h\times Q_h\subset V\times Q$, is available. In particular, assume $V_h\times Q_h$ satisfies the discrete inf-sup condition (cf.~\cite{1986GiraultRaviart-a}),
\begin{equation}\label{e:inf-sup-discrete}
	\inf_{q_h\in Q_h}\sup_{\vec{v}_h\in V_h}\frac{(\operatorname{div} \vec{v}_h,q_h)}{\|\nabla \vec{v}_h\|\|q_h\|}\ge\beta_h>0,
\end{equation}
and let $A_\lambda^h: V_h\to V_h^\prime$ and $\vec{u}_h\in V_h$ be given by 
\begin{equation}\label{eq:primal:discrete}
\langle A_\lambda^h\vec{u}_h,\vec{v}_h\rangle=a_\lambda(\vec{u}_h,\vec{v}_h)=\langle\vec{f},\vec{v}_h\rangle\quad\forall\vec{v}\in V_h.
\end{equation}

\begin{remark}
Under the assumption ${\rm div}V_h\subset Q_h$, \eqref{eq:primal:discrete} has uniform a priori error estimates for all $\lambda\in[0,\infty)$. Here we refer to \cite{2014GuzmanNeilan,2017JohnLinkeMerdonNeilanRebholtz,2018ChristiansenSHuK-a,2020FuGuzmanNeilan} for stable Stokes' element pairs satisfying ${\rm div}V_h\subset Q_h$ and using discontinuous pressure spaces. We need to be careful when ${\rm div}V_h\not\subset Q_h$ and  modify the bilinear form $a_h^\lambda$ in such cases as follows:
\begin{equation}
    \langle A_\lambda^h\vec{u}_h,\vec{v}_h\rangle=a(\vec{u}_h,\vec{v}_h)+\lambda b(\vec{v}_h,\Pi_h{\rm div}\vec{u}_h)=\langle\vec{f},\vec{v}_h\rangle\quad\forall\vec{v}\in V_h,
    \end{equation}
where $\Pi_h$ is the $L^2$-projection onto $Q_h$.  This approach has been discussed in \cite{2007Braess,1999Schoberl}, where the role of $\Pi_h$ is implicit but crucial to drawing the connection with the Stokes' equations, thus ensuring robust a priori error estimates. In general, the action of $\Pi_h$ is computed by inverting a mass matrix, which could be costly, especially when the functions in $Q_h$ are subject to inter-element continuity constraints. It is, however, easy to justify that we can use a spectrally equivalent diagonal matrix, such as the diagonal of the mass matrix, to implement the action of $\Pi_h$. This is the approach we have taken in the numerical tests for the Taylor-Hood~\cite{1973TaylorCHoodP-a} ($\mathcal{P}_2\times\mathcal{P}_1$) element as presented in Section~\ref{sec:results}.
\end{remark}

Finally, let $W_h=\{\vec{v}_h\in V_h: b(\vec{v}_h,q_h)=0~\forall q_h\in Q_h\}$, $A_h=A_0^h$, and $P_h: V_h\to W_h$ be the $a(\cdot,\cdot)$ orthogonal projection. A proof, analogous to the proof of Theorem \ref{t:spectral-equivalence}, then leads to 
\begin{equation}\label{e:discpreconditioner}
		\left (A^h_{\lambda}\right )^{-1}\eqsim M^h_{\lambda}:=\frac{\lambda}{1+\lambda}P_hA_h^{-1} +\frac{1}{\lambda+1} A_h^{-1}.
	\end{equation}

An immediate, and important, observation is that computing the action of the preconditioner, $M_{\lambda}^h$, does not require a evaluating
$P_h\vec{v}_h$ for some $\vec{v}_h\in V_h$ directly. This would be difficult, as a basis in the weakly divergence-free space, $W_h$, is not always available. Instead, the action of $P_h$ is computed by solving the discrete Stokes' problem:\\ Find $P_h\vec{v}_h\in V_h$, $p_h\in Q_h$ such that
\begin{equation}\label{stokes-discrete}
	\begin{aligned}
		   a( P_h\vec{v}_h , \vec{w}_h ) + b( \vec{w}_h ,p_h) &= a( \vec{v}_h, \vec{w}_h ),\quad &&\forall\vec{w}_h\in V_h,\\    
     b(P_h\vec{v}_h ,q_h)&=0,\quad &&\forall q_h\in Q_h.
	\end{aligned}
\end{equation}
We add that the analysis we have given here provides a theoretical justification for the preconditioning results reported in~\cite{2013KarerKrausZikatanov-a}.

\section{Numerical Results}\label{sec:results}

In this section, we provide numerical examples demonstrating the effectiveness and robustness of the preconditioner, $M^h_{\lambda}$, as defined in~\eqref{e:discpreconditioner}. The computational domain is $\Omega = [0,1] \times [0,1]$, and we seek to solve~\eqref{elasticity-b-forms} for the exact solution $\vec{u}$ given by
\begin{equation*}
	\vec{u} =
	\begin{pmatrix}
		\sin(\pi x) \cos(\pi y), \
		-\cos( \pi x) \sin(\pi y)
	\end{pmatrix}.
\end{equation*}
We compute the right-hand side, $\vec{f}$, accordingly and impose pure Dirichlet boundary conditions, i.e., $\Gamma = \Gamma_D$. Equations are discretized on a uniformly refined triangular mesh with mesh size $h=2^{-L}$, where we use continuous and piecewise quadratic $\mathbb{R}^d$-valued polynomials in $\mathcal{P}_2$ to approximate $\vec{u}$. We test different finite-element spaces for the multiplier $p$.

The resulting linear system of equations is solved by the preconditioned conjugate gradient method, with $M_{\lambda}^h$ as the preconditioner. We implement the actions of $P_hA_h^{-1}$ and $A_h^{-1}$ using direct solvers. The stopping criterion is based on the relative residual with tolerance $10^{-6}$.  All numerical experiments, including the
discretization and the preconditioned linear solvers, were implemented using the finite-element and solver library \verb|HAZmath|~\cite{hazmath}.

For the first set of experiments, we employ the space of piecewise constants, $\mathcal{P}_0$, as the finite-element space for $p$. Thus, we implement the action of $P_hA_h^{-1}$ by solving Stokes' equations~\eqref{stokes-b-forms} using the $\mathcal{P}_2\times\mathcal{P}_0$ finite-element pair, which is known to be inf-sup stable in 2D. We report the performance of the proposed preconditioner in Table~\ref{tab:2D-P2P0-iter} (for the number of iterations) and Table~\ref{tab:2D-P2P0-cond} (for the condition number of $M^h_\lambda A^h_\lambda$). These results show that the number of iterations and condition number remain stable as $\nu \to 0.5^-$, i.e., as $\lambda \to \infty$. This observation confirms our theoretical predictions.

\begin{table}[h!]
\begin{center}
\caption{Number of iterations for $\mathcal{P}_2\times\mathcal{P}_0$} \label{tab:2D-P2P0-iter}
\begin{tabular}{c | c c c c c }
\hline \hline 
$h=2^{-L}$	 & $\nu = 0.25$ & $\nu = 0.4$ & $\nu = 0.49$ & $\nu = 0.499$  &$\nu = 0.4999$\\ \hline 
$L=2$  &  4  &  5  &  6  &  6  &  6  \\
$L=3$  &  3  &  4  &  6  &  7  &  7  \\
$L=4$ &  3  &  4  &  6  &  7  &  7  \\
$L=5$ &  3  &  4  &  6  &  7  &  7  \\
$L=6$ &  3  &  4  &  5  &  7  &  7  \\
\hline \hline 
\end{tabular}
\end{center}
\end{table}

\begin{table}[h!]
	\begin{center}
	\caption{Condition number of $M^h_\lambda A^h_\lambda$ for $\mathcal{P}_2\times\mathcal{P}_0$} \label{tab:2D-P2P0-cond}
		\begin{tabular}{c | c c c c c }
			\hline \hline 
$h=2^{-L}$& $\nu = 0.25$ & $\nu = 0.4$ & $\nu = 0.49$ & $\nu = 0.499$  & $\nu = 0.4999$\\ \hline 
			$L=2$  &  1.15  &  1.48  &  2.52  &  2.84  &  2.88  \\
			$L=3$  &  1.14  &  1.44  &  2.47  &  2.98  &  3.03  \\
			$L=4$ &  1.13  &  1.44  &  2.55  &  2.90  &  2.94  \\
			$L=5$ &  1.13  &  1.44  &  2.51  &  2.86  &  2.89  \\
			$L=6$ &  1.13  &  1.44  &  2.45  &  2.87  &  2.91  \\
			\hline \hline 
		\end{tabular}
	\end{center}
\end{table}

For the second set of experiments, we utilize the nodal element space, $\mathcal{P}_1$, for $p$. In this case, as we pointed out earlier, assembling $b(\vec{v}_h,\Pi_h{\rm div} \vec{u}_h)$  requires inverting $\mathcal{M}_h$, the $\mathcal{P}_1$ mass matrix. In the tests, however, we use the inverse of ${\rm diag}(\mathcal{M}_h)$ to approximate $\mathcal{M}_h^{-1}$. Then the action of $P_hA_h^{-1}$ is computed  by solving \eqref{stokes-discrete} using the $\mathcal{P}_2\times\mathcal{P}_1$ finite-element pair (Taylor-Hood). The performance of $M_{\lambda}^h$ is presented in Table~\ref{tab:2D-P2P1-iter} (for the number of iterations) and Table~\ref{tab:2D-P2P1-cond} (for the condition number of $M^h_\lambda A^h_\lambda$). Although the number of iterations and condition number is slightly higher than those obtained with the $\mathcal{P}_2\times\mathcal{P}_0$ finite-element pair, they remain stable as $\nu \to 0.5^-$, i.e., as $\lambda \to \infty$. This indicates that the efficacy of the proposed preconditioner $M_{\lambda}^h$ is not affected by the choice of finite-element space for $p$, as long as the corresponding finite-element pair is inf-sup stable.

\begin{table}[h!] 
	\begin{center}
\caption{Number of iterations for $\mathcal{P}_2\times\mathcal{P}_1$} \label{tab:2D-P2P1-iter}
		\begin{tabular}{c | c c c c c }
			\hline \hline 
$h=2^{-L}$	& $\nu = 0.25$ & $\nu = 0.4$ & $\nu = 0.49$ & $\nu = 0.499$  & $\nu = 0.4999$\\ \hline 
			$L=2$  &  4  &  5  &  5  &  5  &  5  \\
			$L=3$  &  4  &  6  &  11  &  12  &  12  \\
			$L=4$ &  4  &  6  &  12  &  15  &  15  \\
			$L=5$ &  4  &  6  &  12  &  15  &  15  \\
			$L=6$ &  4  &  6  &  11  &  14  &  15  \\
			\hline \hline 
		\end{tabular}
	\end{center}
\end{table}

\begin{table}[h!] 
	\begin{center}
	\caption{Condition number $M^h_\lambda A^h_\lambda$ for $\mathcal{P}_2\times\mathcal{P}_1$} \label{tab:2D-P2P1-cond}
		\begin{tabular}{c | c c c c c }
			\hline \hline 
$h=2^{-L}$	& $\nu = 0.25$ & $\nu = 0.4$ & $\nu = 0.49$ & $\nu = 0.499$  & $\nu = 0.4999$\\ \hline 
			$L=2$  &  1.20  &  1.71  &  4.31  &  5.69  &  5.89  \\
			$L=3$  &  1.20  &  1.71  &  4.38  &  5.81  & 6.02  \\
			$L=4$ &  1.19  &  1.71  &  4.38  &  5.81  & 6.02  \\
			$L=5$ &  1.18  &  1.71  &  4.38  &  5.81  & 6.02  \\
			$L=6$ &  1.17  &  1.71  &  4.38  &  5.81  & 6.02  \\
			\hline \hline 
		\end{tabular}
	\end{center}
\end{table}

%

\section{Concluding Remarks}\label{sec:conclusion}

Theorem~\ref{t:spectral-equivalence} and the corresponding numerical tests in Section \ref{sec:results} confirm that 
a robust discretization and solvers for nearly incompressible elasticity must rely on robust solvers for Stokes' equations. This point is important as it confirms the relationship between the inf-sup condition for Stokes' equation and the second Korn's inequality for linear elasticity.  Numerical results show that the proposed preconditioner, $M_{\lambda}^h$, remains stable as $\nu \to 0.5^-$, i.e., as $\lambda \to \infty$, regardless of the choice of finite-element space for $p$, as long as it forms a Stokes' inf-sup stable finite-element pair. 
While the preconditioner is robust for several families of finite elements, in our view, the best suited ones are elements recently developed in~\cite{2019ChristiansenSHuK-a,2018ChristiansenSHuK-a}, as they provide spaces with projections that commute with the divergence and lead directly to discretizations for linear elasticity. 
Finally, we note that this work would be useful in designing auxiliary space preconditioners for the elasticity equation when discretized using (any) stable finite-element space. 


%
%

\section*{Acknowledgements}
The work of Adler and Hu is partially supported by the National Science Foundation (NSF) under grant DMS-2208267. 
The research of Zikatanov is supported in part by the U.~S.-Norway Fulbright Foundation and the U.~S. National Science Foundation grant DMS-2208249. 

\renewcommand{\appendix}{\par
 \setcounter{section}{0}
 \setcounter{subsection}{0}
  \gdef\thesection{\Alph{section}}
}
\appendix

\section{Proof of Lemma~\ref{l:korn-11}}\label{apx:lemma} 

\begin{applemma}\label{l:korn-12}
	Let  $\mathfrak{R}$ be the space of rigid body motions
	\[
	\mathfrak{R}=\left\{\vec{c}+\mathfrak{m}\vec{x}\;|\;\vec{c}\in\mathbb{R}^d,\quad \mathfrak{m}\in \mathfrak{so}(d)\right\},
	\]
 where $\vec{x}$ is the position vector in $\mathbb{R}^d$
	and $\mathfrak{so}(d)$ is the algebra of the real and anti-symmetric  
	$d\times d$ matrices.  Then it holds that
	\begin{equation}\label{e:a-korn-11}
		\|\nabla \vec{u}\| \lesssim  \|\varepsilon(\vec{u})\|,\quad\forall\vec{u}\in [H^1_D(\Omega)]^d\cup\big([H^1(\Omega]^d\cap\mathfrak{R}^{\perp_{L^2}}\big).
	\end{equation}
\end{applemma}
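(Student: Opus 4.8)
The plan is to upgrade the first Korn inequality \eqref{e:korn-0}, namely $\|\nabla\vec{u}\|\lesssim\|\vec{u}\|+\|\varepsilon(\vec{u})\|$, to \eqref{e:a-korn-11} by a compactness argument that removes the lower-order term $\|\vec{u}\|$. Write $X$ for either $[H^1_D(\Omega)]^d$ or $[H^1(\Omega)]^d\cap\mathfrak{R}^{\perp_{L^2}}$; in both cases $X$ is a closed subspace of $[H^1(\Omega)]^d$ (the kernel of a bounded linear map: the trace onto $\Gamma_D$ in the first case, the $L^2$-projection onto the finite-dimensional space $\mathfrak{R}$ in the second). I first record the standard fact that $\varepsilon(\vec{u})=0$ with $\vec{u}\in[H^1(\Omega)]^d$ forces $\vec{u}\in\mathfrak{R}$: differentiating the identities $\partial_i u_j+\partial_j u_i=0$ shows (distributionally) that all second derivatives of $\vec{u}$ vanish, so $\vec{u}$ is affine with constant antisymmetric gradient. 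Consequently $\mathfrak{R}\cap X=\{0\}$: a rigid motion in $[H^1_D(\Omega)]^d$ vanishes on the positive-measure set $\Gamma_D$ and is therefore zero, while a rigid motion in $\mathfrak{R}^{\perp_{L^2}}$ is $L^2$-orthogonal to itself and is therefore zero.

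Next I argue by contradiction. Suppose \eqref{e:a-korn-11} fails on $X$. Then there is a sequence $\{\vec{u}_n\}\subset X$ with $\|\nabla\vec{u}_n\|=1$ and $\|\varepsilon(\vec{u}_n)\|\to0$. By \eqref{e:korn-0} the sequence $\{\vec{u}_n\}$ is bounded in $[H^1(\Omega)]^d$, so by the Rellich--Kondrachov theorem a subsequence (not relabeled) converges strongly in $[L^2(\Omega)]^d$. Applying \eqref{e:korn-0} to the differences gives $\|\nabla(\vec{u}_n-\vec{u}_m)\|\lesssim\|\vec{u}_n-\vec{u}_m\|+\|\varepsilon(\vec{u}_n)\|+\|\varepsilon(\vec{u}_m)\|\to 0$ as $n,m\to\infty$, so $\{\vec{u}_n\}$ is Cauchy, hence convergent, in $[H^1(\Omega)]^d$; call the limit $\vec{u}$, which lies in $X$ since $X$ is closed. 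Passing to the limit yields $\|\nabla\vec{u}\|=1$ and $\|\varepsilon(\vec{u})\|=0$; the latter forces $\vec{u}\in\mathfrak{R}\cap X=\{0\}$, contradicting $\|\nabla\vec{u}\|=1$. This establishes \eqref{e:a-korn-11}, and hence Lemma~\ref{l:korn-11}.

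Since this is a classical result, I do not expect a serious obstacle; the only delicate point is the elimination of the lower-order term, i.e. justifying that the weak/strong limit of the contradiction sequence actually lies back in $X$ (which is exactly where the closedness of $X$ in $[H^1(\Omega)]^d$ and the compact embedding $[H^1(\Omega)]^d\hookrightarrow[L^2(\Omega)]^d$ are used) together with the characterization $\ker\varepsilon=\mathfrak{R}$. As an alternative to the contradiction argument, one may invoke the Peetre--Tartar lemma directly: on $X$ the operator $\vec{u}\mapsto\varepsilon(\vec{u})$ is bounded and injective with closed range, the inclusion $X\hookrightarrow[L^2(\Omega)]^d$ is compact, and \eqref{e:korn-0} is precisely the hypothesis of that lemma, whose conclusion is \eqref{e:a-korn-11}.
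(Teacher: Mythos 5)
Your overall strategy is exactly the paper's: a contradiction/compactness argument that removes the lower-order term from \eqref{e:korn-0}, using Rellich--Kondrachov, the Cauchy-in-$H^1$ trick via \eqref{e:korn-0} applied to differences, the characterization $\ker\varepsilon=\mathfrak{R}$, and finally $\mathfrak{R}\cap X=\{0\}$. However, as written there is one step whose justification is wrong: you claim that \eqref{e:korn-0} makes the sequence $\{\vec{u}_n\}$ bounded in $[H^1(\Omega)]^d$. With the normalization $\|\nabla\vec{u}_n\|=1$ and $\|\varepsilon(\vec{u}_n)\|\to 0$, inequality \eqref{e:korn-0} reads $1\lesssim\|\vec{u}_n\|+o(1)$, i.e.\ it bounds $\|\vec{u}_n\|$ from \emph{below}, not from above, so it cannot yield the $L^2$ bound needed for Rellich--Kondrachov. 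What is needed (and what the paper invokes at precisely this point) is a Poincar\'e-type inequality valid on $X$: the Poincar\'e inequality for $[H^1_D(\Omega)]^d$, and the Poincar\'e--Wirtinger inequality for $\mathfrak{R}^{\perp_{L^2}}$ (orthogonality to the constants contained in $\mathfrak{R}$ gives zero mean). Alternatively, you could avoid Poincar\'e altogether by normalizing $\|\vec{u}_n\|_{H^1}=1$ instead of $\|\nabla\vec{u}_n\|=1$ and deriving the contradiction from $\|\vec{u}_n\|_{H^1}\to 0$; but with your normalization the cited inequality does not do the job.

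A secondary point: you assert without proof that a rigid body motion in $[H^1_D(\Omega)]^d$ ``vanishes on the positive-measure set $\Gamma_D$ and is therefore zero.'' This is true but not immediate, and the paper devotes a paragraph to it: if $\vec{c}+\mathfrak{m}\vec{x}$ vanishes on a planar face of $\Gamma_D$, then $\mathfrak{m}$ has a $(d-1)$-dimensional kernel, and since a real antisymmetric matrix has even rank (its nonzero eigenvalues are purely imaginary and come in conjugate pairs), this forces $\mathfrak{m}=0$ and then $\vec{c}=0$. Some version of this argument (or an explicit appeal to the affine structure of rigid motions plus the even-rank fact) should be included; the zero set of a general affine map can have positive $(d-1)$-dimensional measure, so antisymmetry is genuinely used here. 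With these two repairs your proof coincides with the paper's; your uniform treatment of both cases via $\mathfrak{R}\cap X=\{0\}$ and the remark that the Peetre--Tartar lemma packages the compactness step are fine and slightly tidier than the paper's ``the second case is similar and simpler.''
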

\begin{proof}  
	First, we only consider $\vec{u}\in V=[H^1_D(\Omega)]^d$ and remark that the proof for the case when $\vec{u}\in [H^1(\Omega]^d\cap\mathfrak{R}^{\perp_{L^2}}$ is similar and simpler.
	
	To start, assume that~\eqref{e:a-korn-11} is not true. Then, there exists a sequence $\{\vec{v}_n\}\subset V$ such that $\|\nabla \vec{v}_n\|=1$ and $\|\varepsilon(\vec{v}_n)\|\le\frac{1}{n}$. From the Poincar\'e inequality, we conclude that $\{\vec{v}_n\}$ is a bounded sequence in $L^2(\Omega)$. Next, since the embedding $V=[H^1_D(\Omega)]^d\hookrightarrow L^2(\Omega)$ is compact, we conclude that this bounded sequence has a subsequence convergent in $L^2(\Omega)$. We denote the subsequence again by $\{\vec{v}_n\}$.  Applying~\eqref{e:korn-0} to $\vec{u}=(\vec{v}_n-\vec{v}_m)$ for sufficiently large $n$ and $m$, we find that $\{\vec{v}_n\}$ is a Cauchy sequence in $V$ and hence, converges to some element $\vec{v}\in V$. This gives $\|\nabla \vec{v}\|=1$ and $\varepsilon(\vec{v}_n)\to 0$. Hence, $\varepsilon(\vec{v})=0$. This implies that $\vec{v}$ is a rigid body motion, namely, $\vec{v}=\mathfrak{m}\vec{x}+\vec{c}\in \mathfrak{R}$.
	
	What remains is to show that if $\Gamma_D$ has a nonzero $(d-1)$ dimensional measure, then $\vec{v}=0$. This will lead to a contradiction with the assumption that~\eqref{e:a-korn-11} does not hold. Let us pick $\vec{x}\in \Gamma_D$ such that $\Gamma_D$ is smooth in a neighborhood of $\vec{x}$. For the case of a polyhedral domain, which we consider here, take $\vec{x}$ in the interior of a planar face of $\Gamma_D$.  For any $\vec{y}$ that is in this planar face, we have $\mathfrak{m}(\vec{x}-\vec{y})=0$.  Since the face is of dimension $(d-1)$, it follows that $\mathfrak{m}$ has at least a $(d-1)$-dimensional kernel. However, $\mathfrak{m}$ is antisymmetric and real, and all its nonzero eigenvalues are pure imaginary and are complex conjugate to each other, that is, the nonzero eigenvalues come in pairs. Hence, we cannot have any nonzero eigenvalue of $\mathfrak{m}$. Thus, $\mathfrak{m}=0$ and $\vec{v}$ is a constant vector vanishing on $\Gamma_D$. We then conclude that $\vec{v}=0$ which contradicts $\|\nabla \vec{v}\|=1$ and shows~\eqref{e:a-korn-11}.
\end{proof}

\bibliographystyle{elsarticle-num}
\bibliography{bib_elast0}

\end{document}